\journal{Journal of \LaTeX\ Templates}
\newtheorem{thm}{Theorem}[section]
\newtheorem{defi}[thm]{Definition}
\newtheorem{prop}[thm]{Proposition}
\newtheorem{lem}[thm]{Lemma}
\newtheorem{exam}[thm]{Example}
\newcommand{\GL}{{\rm GL}}
\newcommand{\Hom}{{\rm Hom}}
\newcommand{\Q}{{\mathbb{Q}}}
\newcommand{\X}{{\mathbb{X}}}
\newcommand{\C}{{\mathbb{C}}}
\newcommand{\calB}{{\mathcal{B} }}
\newcommand{\calT}{{\mathcal{T} }}
\newcommand{\calL}{{\mathcal{L} }}
\newcommand{\calR}{{\mathcal{R} }}
\newcommand{\calP}{{\mathcal{P} }}
\newcommand{\calQ}{{\mathcal{Q} }}
\newcommand{\ord}{\mbox{\rm ord}}
\newcommand{\bfc}{\mathbf{c}}
\newcommand{\linrel}{\mbox{\rm LinRel}}
\newcommand{\rem}{\mbox{\rm rem}}
\begin{document}

\begin{frontmatter}
\title{Hilbert's Irreducibility Theorem for Linear Differential Operators}
\tnotetext[mytitlenote]{This work was supported by the National Key Research Project of China under grant No. 2023YFA1009401 and No. 2020YFA0712300, and by Postdoctoral Fellowship Program of CPSF (GZC20230750).}
\author{Ruyong Feng, Zewang Guo}

\address{KLMM,Academy of Mathematics and Systems Science, Chinese Academy of Sciences and School of Mathematics, University of Chinese Academy of Sciences, 100190, Beijing\\
ryfeng@amss.ac.cn, guozewang@amss.ac.cn}

\author{Wei Lu}

\address{
Hubei Key Laboratory of Applied Mathematics, Faculty of Mathematics
and Statistics, Hubei University, Wuhan 430062, China.\\
weilu23@hubu.edu.cn.}

%    General info

\begin{abstract}
We prove a differential analogue of Hilbert's irreducibility theorem. Let $\calL$ be a linear differential operator with coefficients in $C(\X)(x)$ that is irreducible over $\overline{C(\X)}(x)$, where $\X$ is an irreducible affine algebraic variety over an algebraically closed field $C$ of characteristic zero. We show that the set of $c\in \X(C)$ such that the specialized operator $\calL^c$ of $\calL$ remains irreducible over $C(x)$ is Zariski dense in $\X(C)$.
\end{abstract}

\begin{keyword}
{linear differential operator, Hilbert's irreducibility theorem, ad-open set}
\MSC[2020] 16S32, 68W30
\end{keyword}

\end{frontmatter}
\biboptions{sort&compress}
\section{Introduction}
Linear differential operators, as typical non-commutative polynomials, share many algebraic properties with usual polynomials. Concepts such as the Euclidean algorithm, irreducibility, Eisenstein's criterion,  greatest common divisor, and least common multiple from the polynomial ring have counterparts in the realm of linear differential operators. Readers can refer to \cite{Ore1933} for an algebraic framework for skew polynomials, including not only linear differential operators but also linear difference and q-difference operators. Similar to the polynomial case, testing irreducibility and factorization are fundamental tasks in the algorithmic aspect of linear differential operators. However, factorizing linear differential operators is more complicated, as they may have infinitely many irreducible divisors, and their factorization is not usually unique. To address the uniqueness of factorization, one has to introduce the notion of similarity (see \cite{Ore1933} for the definition and \cite{Jacobson1996, LiWang2011} for methods of testing similarity). Despite these complexities, various methods have been developed for the aforementioned tasks, see, for example, \cite{Grigoriev1990,Tsarev1994,vanHoeij1997,CompointSinger:computinggaloisgroups1999,LiSchwarzTsarev2003,CompointWeil2004} for algorithms of factorization and see \cite{Kovacic1972,Grigoriev1990,Singer1996,ChurchillZhang2009} for methods of testing irreducibility. 

In this paper, our emphasis is not on the development of algorithms for testing irreducibility or factorization. Instead, we concentrate on addressing the following problem: given an irreducible linear differential operator with parameters, describe the set of values for which the specializations of the linear differential operator at these values remain irreducible. This can be viewed as a differential analog of Hilbert's irreducibility theorem. Let $f \in \mathbb{Q}[\mathbf{x}, y]$ be an irreducible polynomial, where $\mathbf{x} = {x_1, \dots, x_m}$.
Hilbert's irreducibility theorem asserts that there exist infinitely many $\mathbf{c} \in \mathbb{Q}^m$ such that $f(\mathbf{c}, y)$ remains irreducible.  Fields where Hilbert's irreducibility theorem holds are referred to as Hilbertian fields. For precise definition, readers can refer to Chapter 12 of \cite{FriedJarden2008}.
By leveraging Hilbert's irreducibility theorem, one can effectively reduce the inverse problem of classical Galois theory over $\mathbb{Q}$ to the problem over $\mathbb{Q}(t)$. Notably, Hilbert demonstrated that both symmetric and alternating groups can be realized as the Galois groups of polynomials over $\mathbb{Q}$.  

In the context of this paper, we present an irreducibility theorem for linear differential operators. Precisely, assume that $\X$ is an irreducible affine algebraic variety over an algebraically closed field $C$ of characteristic zero, and $C(\X)$ stands for the field of all rational functions on $\X$. Let $C(\X)(x)$ be the differential field with usual derivative $\delta=\frac{d}{dx}$, and let $C(\X)(x)[\delta]$ be the ring of differential operators with coefficients in $C(\X)(x)$, where $\delta x=x\delta +1$. As usual, $\X(C)$ denotes the set of $C$-points of $\X$, which is identified with $\Hom_C(C[\X],C)$. For $c\in \X(C)$ and $f\in C[\X]$, the ring of regular functions on $\X$,  we shall denote by $f^c$ the image of $f$ under the homomorphism $c$.  Furthermore, if $f\in C[\X][x]$ then $f^c$ represents the polynomial obtained by applying $c$ to the coefficients of $f$. Set 
\begin{equation}
\label{eqn:differenceoperator}
    \calL=a_n(x) \delta^n+a_{n-1} (x)\delta^{n-1}+\dots+a_0(x)
\end{equation}
where $a_i(x)\in C(\X)(x)$ and $a_n(x)\neq 0$. For $c\in \X(C)$, $\calL^c$ denotes the operator in $C(x)[\delta]$ obtained by applying $c$ to the coefficients of $\calL$. It is said that $\calL^c$ is well-defined if the denominators of all coefficients of $\calL$ do not vanish under the application of $c$ and $a_n^c\neq 0$. The main result of this paper is the following theorem.
\begin{thm}
\label{thm:mainresult}
 Assume that $\calL\in C(\X)(x)[\delta]$ is irreducible over $\overline{C(\X)}(x)$. Then there exists an ad-open subset $U$ of $\X(C)$ such that for any $c\in U$, $\calL^c$ is well-defined and $\calL^c$ is irreducible over $C(x)$.
\end{thm}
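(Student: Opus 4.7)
The plan is to reduce to a parameterized factorization problem and derive a contradiction from the generic fiber if too many specializations factor. Clearing denominators of the $a_i(x)\in C(\X)(x)$ by a common $d(x)\in C[\X][x]$, the locus where $\calL^c$ fails to be well-defined --- where $d^c$ or the leading $x$-coefficient of $a_n\cdot d$ vanishes identically in $x$ --- is a proper Zariski-closed subset of $\X$. Removing it, I may assume $\calL^c$ is well-defined for all $c\in\X(C)$, and the task becomes finding an ad-open $U\subseteq\X(C)$ on which $\calL^c$ stays irreducible over $C(x)$.

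For each $k\in\{1,\dots,n-1\}$, right division in the skew-polynomial ring identifies right factors of $\calL$ of order $k$ with monic operators $Q=\delta^k+\sum_{i=0}^{k-1}q_i\delta^i$ for which the remainder of $\calL$ on right-division by $Q$ vanishes. The key step is to produce, via Beke-type bounds, a finite-dimensional $C$-subspace $V_k\subseteq C(\X)(x)$ and a Zariski-open $\X_k^\circ\subseteq\X$ such that (i) every right factor of order $k$ of $\calL$ over $\overline{C(\X)}(x)$ has coefficients in $\overline{C(\X)}\otimes_C V_k$, and (ii) for every $c\in\X_k^\circ(C)$, every right factor of order $k$ of $\calL^c$ over $C(x)$ has coefficients in the specialization $V_k^c\subseteq C(x)$. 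This confines the search for factors to a fixed finite-dimensional affine $C$-space $A_k$ of candidate coefficient vectors, and the incidence set
\[
\calF_k \;=\; \bigl\{\,(c,Q)\in \X_k^\circ\times A_k \,:\, Q\ \text{right-divides}\ \calL^c\,\bigr\}
\]
is Zariski-constructible in $\X\times A_k$, since right-divisibility amounts to the vanishing of the finitely many $x$-coefficients of an explicit remainder polynomial in the $q_i$ and the coefficients of $\calL^c$.

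If the projection $\pi_k\colon\calF_k\to\X$ were Zariski-dense, Chevalley's theorem would furnish a non-empty generic fiber, so (passing to the algebraic closure) a $Q$ with coefficients in $\overline{C(\X)}\otimes_C V_k$ right-dividing $\calL$ in $\overline{C(\X)}(x)[\delta]$, contradicting irreducibility. Hence $\pi_k(\calF_k)$ lies inside some proper Zariski-closed $Z_k\subsetneq\X$, and the set $U = \bigcap_{k=1}^{n-1}\bigl(\X_k^\circ(C)\setminus Z_k(C)\bigr)$ is a dense Zariski-open --- hence ad-open --- subset of $\X(C)$ on which $\calL^c$ is well-defined and irreducible over $C(x)$.

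The principal obstacle is the uniform Beke-type bound underlying the parametrization: the classical statements treat a single operator, via its singular points, Newton polygons and local exponents, whereas here they must hold simultaneously across the family. Concretely, one must isolate the Zariski-open locus $\X_k^\circ$ on which the singular structure of $\calL$ does not degenerate under specialization --- no collision of singular points, no new resonances among local exponents, and Newton polygons specializing without jump --- and verify that the classical pole and degree bounds for coefficients of right factors transfer to each $\calL^c$ with $c\in\X_k^\circ(C)$ without loss. Once this uniform parametrization is in place, the remainder of the argument is algebro-geometric bookkeeping via Chevalley's theorem.
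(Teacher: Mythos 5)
Your overall architecture (bound the coefficient space of candidate right factors of each order, set up the divisibility conditions as a constructible incidence set, and rule out a dense image by pulling a solution back to the generic fibre) is the same skeleton as the paper's proof, which also parametrizes order-$s$ candidates by indeterminate coefficients of bounded degree and uses the vanishing of the remainder plus a Nullstellensatz/Chevalley-type specialization lemma. But there is a genuine gap at the step you yourself flag as the ``principal obstacle,'' and it is not bookkeeping: your item (ii) --- a \emph{Zariski-open} locus $\X_k^\circ$ on which every right factor of every $\calL^c$ has coefficients in a fixed finite-dimensional space $V_k^c$ --- is false in general, and with it your final conclusion (a dense Zariski-open $U$ of irreducible specializations) is too strong. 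The paper's own Bessel example shows why: for $\calL=\delta^2+\frac1x\delta+\frac{x^2-\alpha^2}{x^2}$, the specialization $\calL^c$ is reducible exactly when $c-\frac12\in\Z$, a Zariski-dense subset of $\C$; moreover the degrees of the certificates $\delta(h)/h$ of the exponential solutions grow without bound as $c$ runs over $\frac12+\Z$, so for any fixed $N$ the locus where the degree bound fails is itself Zariski dense. No non-empty Zariski-open set can carry the uniform bound, because the degeneration conditions are ``integer resonances among local exponents,'' which cut out Zariski-dense, non-constructible sets.

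This is precisely why the theorem is stated with \emph{ad-open} sets and why the paper routes the degree bound through Proposition 5.10 of Feng--Wibmer: an exponential bound $N(\calL_{[i]})$ for the exterior-power operators specializes correctly only on an ad-open subset $\calB(\Gamma,\X)$, i.e.\ off the locus where some finitely generated additive subgroup of $C[\X]$ (encoding integer relations among exponents) fails to inject. The paper then converts this exponential bound into a coefficient-degree bound for \emph{all} irreducible right factors (Propositions 2.5--2.6), obtaining an ad-open $U_1$, and only the remaining ``remainder vanishes has no solution'' condition is handled on a Zariski-open set via the Nullstellensatz. To repair your argument you must replace $\X_k^\circ$ by such an ad-open set; the Chevalley/incidence-variety part of your plan then survives essentially unchanged, but the set $U$ you obtain is ad-open rather than Zariski-open, which is exactly the statement of the theorem.
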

Let's give an example to illustrate the above theorem.
\begin{exam}
Consider the linear differential operator
$$
    \calL=\delta^2+\frac{1}{x}\delta+\frac{x^2-\alpha^2}{x^2}
$$
corresponding to Bessel's differential equation, where $\alpha$ is a parameter. The above operator is irreducible over $\overline{\C(\alpha)}(x)$. From Example 5.8 of \cite{Feng-Wibmer:differentialgaloisgropus}, for $c\in \C$, $\calL^c=\delta^2+\frac{1}{x}\delta+\frac{x^2-c^2}{x^2}$ is irreducible over $\C(x)$ if and only if $c-\frac{1}{2}$ is not an integer. In particular, if $c\notin \Q$ then $\calL^c$ is irreducible over $\C(x)$, and obviously the set $\C\setminus \Q$ is Zariski dense in $\C$. 
\end{exam}

The concept of ad-open sets was initially introduced by Hrushovski in \cite{Hrushovski:computingdifferentialgaloisgroups} and further explored in \cite{Feng-Wibmer:differentialgaloisgropus}. Alongside ab-open sets, it serves to characterize the set of $c \in \X(C)$ for which the specialization of the Galois group of $\calL$ under $c$ precisely matches the Galois group of $\calL^c$. Additionally, it has been demonstrated in \cite{Hrushovski:computingdifferentialgaloisgroups, Feng-Wibmer:differentialgaloisgropus} that every ad-open subset of $\X(C)$ is Zariski dense in $\X(C)$. The proof of Theorem~\ref{thm:mainresult} relies on a result from \cite{Feng-Wibmer:differentialgaloisgropus}, asserting the existence of a positive integer $N$ and an ad-open subset $U$ of $\X(C)$ such that the certificates for all exponential solutions of $\calL^c(y)=0$ have a degree not exceeding $N$ for all $c \in U$.

The paper is organized as follows. In Section 2, we shall show that there exists a uniformly bound for the degrees of irreducible right-hand divisors of $\calL$ and this degree bound is well-behaved under specializations.  In Section 3, we shall prove Theorem~\ref{thm:mainresult}.

\section{Degree bound for the coefficients of irreducible divisors}
\label{sec:ad-opensets}
In this section, leveraging results from \cite{Feng-Wibmer:differentialgaloisgropus}, we are going to show that  there exists a positive integer $N$ and an ad-open subset $U$ of $\X(C)$ such that for all $c\in U$, the coefficients of every irreducible right-hand divisor of $\calL^c$ are uniformly bounded by $N$. Let's begin by recalling the notion of ad-open sets.
\begin{defi}
Let $\Gamma$ be a finitely generated subgroup of $(C[\X],+)$. The set 
$$
   \calB(\Gamma, \X)=\{ c\in \X(C) \mid \mbox{$c$ is injective on $\Gamma$}\}
$$
is called a basic ad-open subset of $\X$. An ad-open subset of $\X(C)$ is defined to be an intersection of finitely many basic ad-open subsets of $\X(C)$. The complement of an ad-open subset is called an ad-closed subset.
\end{defi}
Assume that $U$ is a principal Zariski open subset of $\X(C)$ defined by a nonzero $p\in C[\X] $. Let $\Gamma$ be the subgroup of $(C[\X],+)$ generated by $p$. Then $U=\calB(\Gamma,\X)$. Therefore, every non-empty Zariski open subset of $\X(C)$ is ad-open. As shown in Example 2.22 of \cite{Feng-Wibmer:differentialgaloisgropus}, every finite dimensional $\Q$-vector subspace of $C[\X]$ is ad-closed, for instance, $\Q$ is an ad-closed subset of $\C$.

Let $k$ be an algebraically closed field of characteristic zero, and consider $\calL\in k(x)[\delta]$.  Write
\begin{equation}
\label{eqn:lineardifferenceoperator}
   \calL=b_n\delta^n+b_{n-1}\delta^{n-1}+\dots+b_0,\,\,b_i\in k(x), b_n\neq 0.
\end{equation}
For convenience, we introduce the following definition. 
\begin{defi}
We call $\mathop{\max}\limits_{0\leq i \leq n-1} \left\{\deg\left(\frac{b_i}{b_n}\right)\right\}$ the degree of $\calL$, denoted by $d(\calL)$.
\end{defi}
Suppose that $\calL_1\in k(x)[\delta]$ is an irreducible right-hand divisor of $\calL$, meaning that $\calL=\calL_2\calL_1$ for some $\calL_2\in k(x)[\delta]$ and $\calL_1$ can not be factored further.  Our goal is to establish a bound for $d(\calL_1)$ and investigate its behavior under specializations. It is worth noting that a degree bound in terms of the order and bit-size of $\calL$ has been provided in \cite{Grigorev1989,Grigoriev1990}. To investigate the behavior of this bound under specializations, we now present an alternative degree bound for irreducible right-hand divisors.
\begin{defi}
A positive integer $N$ is called an exponential bound for $\calL$ if $\deg(a)\leq N$ for any $a\in k(x)$ such that $\delta-a$ is a right-hand divisor of $\calL$. We shall use $N(\calL)$ to denote an exponential bound for $\calL$.
\end{defi}
Note that $\delta-a$ is a right-hand divisor of $\calL$ if and only if $\calL$ has an exponential solution $h$ with $a=\delta(h)/h$. Therefore, if $N$ is an exponential bound of $\calL$ then for any exponential solution $h$ of $\calL$, the certificate $\delta(h)/h$ is of degree not greater than $N$.  We shall establish a bound for the degrees of all irreducible right-hand divisors of $\calL$ in terms of an exponential bound for some exterior system of $\calL$.  Let $\calL$ be as in (\ref{eqn:lineardifferenceoperator}), and let $A_\calL$ denote the companion matrix of $\calL$, i.e. 
\[
  A_\calL= \begin{pmatrix} 
        0 & 1 & &  &  \\ & 0 & 1 & & \\  & & \ddots & \ddots & \\ & & & 0 & 1 \\ -\frac{b_0}{b_n} & -\frac{b_1}{b_n} &\dots & \dots & -\frac{b_{n-1}}{b_n}
   \end{pmatrix}.
\]
Let $T_i$ be a transformation matrix that transforms the matrix equation $\delta(Y)=(\wedge^i A_\calL)Y$ into the scale equation $\calL_{[i]}(y)=0$, where $\delta(Y)=(\wedge^i A_\calL)Y$ denotes the $i$th exterior system of $\delta(Y)=A_\calL Y$, see \cite{Grigoriev1990} for the construction. Set 
\begin{equation}
\label{eqn:degreebound}
b(\calL)=\max_{1\leq i\leq n} \left\{2{n \choose i}\deg(T_i)+{n \choose i}\left({n\choose i}-1\right)N(\calL_{[i]})\right\}.
\end{equation}
\begin{prop}
\label{prop:degreebound} 
Suppose that $\calL_1$ is an irreducible right-hand divisor of $\calL$ and $\ord(\calL_1)<\ord(\calL)$. Then 
$
    d(\calL_1)\leq n^3 b(\calL).
$
\end{prop}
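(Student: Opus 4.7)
My approach exploits the classical correspondence between right-hand divisors of $\calL$ of order $r$ and one-dimensional $\delta$-invariant subspaces of the solution space of the $r$-th exterior system $\delta(Y)=(\wedge^r A_\calL)Y$. Set $r=\ord(\calL_1)\in\{1,\dots,n-1\}$ and fix a basis $y_1,\dots,y_r$ of the solutions of $\calL_1$ in a Picard--Vessiot extension of $k(x)$. Form the Pl\"ucker vector $P=(p_I)_{|I|=r}$ whose entries are the $r\times r$ Wronskian minors $p_I=\det(\delta^{i-1}(y_j))_{i\in I,\,1\le j\le r}$, indexed by $I\subset\{0,\dots,n-1\}$. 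Then $P$ satisfies $\delta(P)=(\wedge^r A_\calL)P$ and spans a one-dimensional $\delta$-invariant subspace; transporting through $T_r$, this subspace corresponds to an exponential solution $w$ of $\calL_{[r]}$ whose certificate $u=\delta(w)/w$ obeys $\deg(u)\le N(\calL_{[r]})$ by the very definition of the exponential bound.

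The next step is to bound $\deg(p_I/w)$. Writing $P=T_r^{-1}\,(w,\delta(w),\dots,\delta^{\binom{n}{r}-1}(w))^{T}$ and dividing entrywise by $w$, each $p_I/w$ becomes a $k(x)$-linear combination of the quantities $\delta^{j}(w)/w$ for $0\le j<\binom{n}{r}$. A routine induction shows that $\delta^{j}(w)/w$ is a polynomial in $u,\delta(u),\dots,\delta^{j-1}(u)$, so it has rational-function degree at most $(\binom{n}{r}-1)N(\calL_{[r]})$; combining with the standard bound $\binom{n}{r}\deg(T_r)$ on the entries of $T_r^{-1}$ coming from Cramer's rule gives
\[
\deg(p_I/w)\le \binom{n}{r}\deg(T_r)+\Bigl(\binom{n}{r}-1\Bigr)N(\calL_{[r]}).
\]
Next, the Wronskian identity $\calL_1(y)=W(y_1,\dots,y_r,y)/W(y_1,\dots,y_r)$, expanded along the last column, gives the monic form
\[
\calL_1=\delta^r+\sum_{j=0}^{r-1}(-1)^{r-j}\,\frac{p_{I_j}}{p_{I_r}}\,\delta^j,
\]
with $I_r=\{0,\dots,r-1\}$ and $I_j=\{0,\dots,r\}\setminus\{j\}$. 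Each coefficient is therefore a ratio $(p_{I_j}/w)/(p_{I_r}/w)$, whose degree is at most twice the $p_I/w$-bound above, and taking the maximum over $1\le r\le n-1$ recovers exactly the two summands defining $b(\calL)$ in \eqref{eqn:degreebound}. The overall factor $n^3$ then absorbs the remaining polynomial-in-$n$ slack (the factor $2$, the number of coefficients of $\calL_1$, and the cost of passing from $r\times r$ to $\binom{n}{r}$-sized data in the Cramer estimate for $T_r^{-1}$).

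The main obstacle I anticipate is the degree-bookkeeping in the second paragraph: tracking honestly how $\deg(T_r^{-1})$ propagates through the formula expressing $p_I/w$ in terms of $w,\delta(w),\dots$, and how rational-function degrees accumulate under iterated differentiation of $u$, so that the final estimate really fits the template $\binom{n}{r}\deg(T_r)+(\binom{n}{r}-1)N(\calL_{[r]})$ up to a factor swallowed by $n^3$. Once those two estimates are in place, the Wronskian expression for $\calL_1$ converts them directly into the stated bound on $d(\calL_1)$.
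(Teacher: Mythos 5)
Your route is genuinely different from the paper's. You work on the solution-space side: take a basis $y_1,\dots,y_r$ of the solutions of $\calL_1$ with $r=\ord(\calL_1)$, pass to the Pl\"ucker vector of $r\times r$ Wronskian minors, which is a hyperexponential solution of the $r$-th exterior system, and recover the coefficients of $\calL_1$ from the Wronskian formula $\calL_1(y)=W(y_1,\dots,y_r,y)/W(y_1,\dots,y_r)$. The paper instead works with a single solution $u$ of $\calL_1$ and the space $\linrel(u,k(x))$ of its linear relations, which has dimension $s=n-r$; it imports the entire ``exterior system $+$ exponential bound'' machinery as a black box (Lemma 5.11 of Feng--Wibmer, which supplies a basis of $\linrel(u,k(x))$ with coefficients of degree at most $b(\calL)$), and then extracts the coefficients $\beta_i$ of $\calL_1$ from that basis by Cramer's rule on an $s\times s$ matrix; the factor $n^3$ is exactly the cost $s^3\le n^3$ of that $s\times s$ elimination. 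Your approach, if the bookkeeping holds, is arguably cleaner (it needs no $n^3$ at all, since $2\bigl[\binom{n}{r}\deg(T_r)+(\binom{n}{r}-1)N(\calL_{[r]})\bigr]\le b(\calL)$ already), and it essentially re-proves the content of Lemma 5.11 in dual form rather than citing it.

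The one genuine gap is the estimate $\deg\bigl((T_r^{-1})_{I,j}\bigr)\le\binom{n}{r}\deg(T_r)$, which you attribute to ``Cramer's rule'' and whose slack you claim is ``absorbed by $n^3$.'' It is not. $T_r$ is a $\binom{n}{r}\times\binom{n}{r}$ matrix with rational-function entries; Cramer's rule bounds the entries of $T_r^{-1}$ by cofactors over the determinant, and after clearing denominators this inflates degrees by a factor on the order of $\binom{n}{r}$ at best (and up to $\binom{n}{r}^{3}$ for genuinely rational entries). Since $\binom{n}{r}$ is exponential in $n$ for $r\approx n/2$, this excess cannot be swallowed by a polynomial factor $n^3$ — you are conflating the paper's $s\times s$ Cramer step (where $s<n$, so $s^3\le n^3$ genuinely works) with a $\binom{n}{r}\times\binom{n}{r}$ inversion. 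To close the gap you must either show that the specific cyclic-vector construction of $T_r$ yields an inverse whose entry degrees are controlled linearly by $\deg(T_r)$ (this is presumably what the factor $2\binom{n}{i}\deg(T_i)$ inside the definition of $b(\calL)$ is encoding, and what Lemma 5.11 of Feng--Wibmer establishes once and for all), or simply quote that lemma as the paper does and restrict your own elimination to a matrix of size at most $n$.
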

\begin{proof}
Let $R$ be the Picard-Vessiot ring over $k(x)$ for $\calL(y)=0$, and $u\in R$ be a nonzero solution of $\calL_1(y)=0$. 
As in \cite{Feng-Wibmer:differentialgaloisgropus}, set 
$$
  \linrel(u,k(x))=\left\{ f\in k(x)[z_1,\dots,z_n] \left | \begin{array}{c}
                                                 \mbox{$f$ is linear homogeneous, and} \\
                                                 \mbox{$f(u,\delta(u),\dots,\delta^{n-1}(u))=0$}\end{array}\right. \right\}.
$$
Due to Lemma 5.11 of \cite{Feng-Wibmer:differentialgaloisgropus}, the vector space $\linrel(u,k(x))$ has a $k(x)$-basis consisting of elements of the form $\alpha_1 z_1+\dots+\alpha_n z_n$ with $\alpha_i\in k(x)$ and 
\begin{equation}
\label{eqn:degreeboundforlinearrelation}
  \deg(\alpha_i)\leq 2{n \choose s}\deg(T_s)+{n \choose s}\left({n \choose s}-1\right)N(\calL_{[s]}) \leq b(\calL),
\end{equation}
where $s=\dim_{k(x)}(\linrel(u,k(x)))$ and $\calL_{[s]}$ is the scale linear differential operator corresponding to the matrix equation $\delta(Y)=(\wedge^s A_\calL)Y$ and $T_s$ is the corresponding transformation as described above. Note that $\beta \calL_1$ is also an irreducible right-hand divisor of $\calL$ for any nonzero $\beta\in k(x)$ and $d(\beta \calL_1)=d(\calL_1)$. We may assume that $\calL_1$ is monic. Write $\calL_1=\delta^\nu+\beta_{\nu-1}\delta^{\nu-1}+\dots+\beta_0$, where $\nu=\ord(\calL_1)$ and $\beta_i\in k(x)$.
Since $\calL_1$ is irreducible, $u, \delta(u), \dots, \delta^{\nu-1}(u)$ are linearly independent over $k(x)$ and $\delta^{\nu}(u)=-(\beta_0u+\dots+\beta_{\nu-1}\delta^{\nu-1}(u))$. Hence $s=n-\nu$.

Let $\{p_i:=\alpha_{i,1}z_1+\dots+\alpha_{i,n}z_n \mid 1\leq i \leq s\}$ be a $k(x)$-basis of $\linrel(u,k(x))$ with the degree bound for $\alpha_{i,j}$ as stated in (\ref{eqn:degreeboundforlinearrelation}). 
One has that $q:=\beta_0z_1+\dots+\beta_{\nu-1}z_\nu+z_{\nu+1}\in \linrel(u,k(x))$. Therefore there exist uniquely $c_1,\dots,c_s\in k(x)$ such that $q=\sum_{i=1}^s c_i p_i$. Consider the system of linear equations
\begin{equation}
\label{eqn:linearsystem}
     (x_1,x_2,\dots,x_s)\underbrace{\begin{pmatrix} \alpha_{1,\nu+1} & \alpha_{1,\nu+2} & \dots & \alpha_{1,n} \\  \alpha_{2,\nu+1} & \alpha_{2,\nu+2} & \dots & \alpha_{2,n} \\ \vdots& \vdots& & \vdots\\ \alpha_{s,\nu+1} & \alpha_{s,\nu+2} & \dots & \alpha_{s,n}\end{pmatrix}}_{B}=(1,0,\dots,0).
\end{equation}
From $q=\sum_{i=1}^s c_i p_i$, it is evident that $(c_1,\dots,c_s)$ is a solution of (\ref{eqn:linearsystem}). Suppose that $(\tilde{c}_1,\dots,\tilde{c}_s)$ with $\tilde{c}_i\in k(x)$ is another solution of (\ref{eqn:linearsystem}) then $0\neq \sum_{i=1}^s (c_i-\tilde{c}_i)p_i \in \linrel(u,k(x))$ is of the form $\gamma_1 z_1+\dots +\gamma_{\nu-1}z_{\nu-1}$. This implies that $u,\delta(u),\dots,\delta^{\nu-1}(u)$ are linearly dependent over $k(x)$, leading to a contradiction. Consequently, $(c_1,\dots,c_s)$ is the unique solution of (\ref{eqn:linearsystem}) and thus the coefficient matrix $B$ is invertible. 

Let's estimate a bound for the $\deg(c_i)$. Set $D$ to be the least common multiply of all denominators of $\alpha_{i,j}$, and write $\alpha_{i,j}=\frac{\tilde{\alpha}_{i,j}}{D}$ with $\tilde{\alpha}_{i,j}\in k[x]$. Then neither $\deg(D)$ nor $\deg(\tilde{\alpha}_{i,j})$ is greater than $s^2\max_{i,j} \{\deg(\alpha_{i,j})\}$. Write $\det(B)=M/D^s$, where $M\in k[x]$ and $\deg(M)\leq s^3 \max_{i,j} \{\deg(\alpha_{i,j})\}$. By Cramer's rule, one has that 
% $c_i=\frac{\tilde{c}_i}{D^{s-1}M}$
$c_i=\frac{\det(E_i)}{\det(B)}$
, where $E_i$ is the matrix obtained by replacing the $i$th row of $B$ with $(1,0,\dots,0)$. One sees that $\det(E_i)=\frac{\tilde{c}_i}{D^{s-1}}$, where $\tilde{c}_i\in k[x]$ and
$$
  \deg(\tilde{c}_i)\leq (s-1)\max_{i,j}\{\deg(\tilde{\alpha}_{i,j})\}\leq (s-1)s^2\max_{i,j} \{\deg(\alpha_{i,j})\}.
$$ 
Since each $$
\beta_i=\sum_{j=1}^s c_j\alpha_{j,i+1}=\sum_{j=1}^s \frac{D\tilde{c}_j}{M}\frac{\tilde{\alpha}_{j,i+1}}{D}=\frac{\sum_{j=1}^s\tilde{c}_j\tilde{\alpha}_{j,i+1}}{M}
$$ 
and $s<n$,
it follows that
$$
  \deg(\beta_i)\leq n^3\max_{i,j} \{\deg(\alpha_{i,j})\}\leq n^3 b(\calL).
$$ 
Consequently, $d(\calL_1)\leq n^3 b(\calL)$.
\end{proof}

By employing a reasoning analogous to that presented in the proof of Proposition 5.12 of \cite{Feng-Wibmer:differentialgaloisgropus}, we demonstrate that the degree bound stated in Proposition~\ref{prop:degreebound} behaves consistently under specializations.
\begin{prop}
\label{prop:boundunderspecialization}
Suppose that $\calL\in C(\X)(x)[\delta]$ and $n=\ord(\calL)$. Then there exists an ad-open subset $U$ of $\X(C)$ such that for any $c\in U$ and any irreducible right-hand divisor $P$ of $\calL^c$  with $\ord(P)<n$, $d(P)\leq n^3b(\calL)$.
\end{prop}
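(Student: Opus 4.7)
The plan is to reduce the statement to Proposition~\ref{prop:degreebound} applied to $\calL^c$, and then to bound $b(\calL^c)$ uniformly by $b(\calL)$ on an ad-open set. For any $c$ at which $\calL^c$ is well-defined of order $n$, Proposition~\ref{prop:degreebound} gives $d(P)\leq n^3 b(\calL^c)$ for every irreducible right-hand divisor $P$ of $\calL^c$ with $\ord(P)<n$, so it suffices to exhibit an ad-open $U\subseteq \X(C)$ on which $b(\calL^c)\leq b(\calL)$.

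First I would identify a Zariski-open $U_0\subseteq \X(C)$ on which: (a) $\calL^c$ is well-defined and has order $n$, so the companion matrix $A_\calL$ specializes to the companion matrix $A_{\calL^c}$; (b) the transformation matrix $T_i$ used to convert the $i$th exterior system $\delta(Y)=(\wedge^i A_\calL)Y$ into the scalar equation $\calL_{[i]}(y)=0$ specializes to a matrix $T_i^c$ that plays the same role for $\calL^c$, so that $(\calL_{[i]})^c=(\calL^c)_{[i]}$; and (c) $\deg(T_i^c)\leq \deg(T_i)$ for every $1\leq i\leq n$. Conditions (a)--(c) each amount to the non-vanishing of finitely many elements of $C[\X]$ (denominators, leading coefficients, and the minors controlling the cyclic-vector construction), so their intersection is Zariski-open and non-empty.

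Next, for each $i\in\{1,\dots,n\}$, I would invoke the result from \cite{Feng-Wibmer:differentialgaloisgropus} recalled in the Introduction (in the form used in the proof of Proposition 5.12 of that paper) applied to the operator $\calL_{[i]}\in C(\X)(x)[\delta]$. This produces a positive integer, which may be taken to be $N(\calL_{[i]})$, together with an ad-open subset $U_i\subseteq \X(C)$ such that $N((\calL^c)_{[i]})\leq N(\calL_{[i]})$ for every $c\in U_i$; i.e., every exponential solution of $(\calL^c)_{[i]}(y)=0$ has a certificate of degree at most $N(\calL_{[i]})$. Setting $U=U_0\cap\bigcap_{i=1}^n U_i$, which is a finite intersection of ad-open subsets and hence ad-open, I obtain, for every $c\in U$ and every $1\leq i\leq n$,
$$
2{n\choose i}\deg(T_i^c)+{n\choose i}\left({n\choose i}-1\right)N((\calL^c)_{[i]})\leq 2{n\choose i}\deg(T_i)+{n\choose i}\left({n\choose i}-1\right)N(\calL_{[i]}).
$$
Taking the maximum over $i$ yields $b(\calL^c)\leq b(\calL)$, and combining with Proposition~\ref{prop:degreebound} gives the conclusion.

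The main obstacle I expect is step (b)--(c): one must check that the cyclic-vector construction of $T_i$ and the resulting scalar operator $\calL_{[i]}$ genuinely commute with specialization on a Zariski-open set, because these constructions involve generic choices whose validity after applying $c$ is encoded by the non-vanishing of certain minors. The other delicate point is confirming that the specialization lemma from \cite{Feng-Wibmer:differentialgaloisgropus} can be applied in a uniform way to each of the $n$ operators $\calL_{[i]}$, yielding the integer bound $N(\calL_{[i]})$ rather than some constant depending on $c$; once this is set up correctly, the rest of the argument is a routine bookkeeping of the quantities appearing in the definition of $b(\calL)$.
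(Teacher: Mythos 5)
Your proposal is correct and follows essentially the same route as the paper: a Zariski-open set ensuring that the companion matrix, the exterior systems, the transformation matrices $T_i$, and the scalar operators $\calL_{[i]}$ all commute with specialization, intersected with the ad-open sets furnished by Proposition 5.10 of \cite{Feng-Wibmer:differentialgaloisgropus} on which $N(\calL_{[i]})$ remains an exponential bound for $(\calL_{[i]})^c$, giving $b(\calL^c)\leq b(\calL)$ and hence the conclusion via Proposition~\ref{prop:degreebound}. The only cosmetic difference is that you list $\deg(T_i^c)\leq\deg(T_i)$ as an open condition, whereas it holds automatically wherever $T_i^c$ is defined.
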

\begin{proof}
Let $A_\calL, T_i$ be as discussed before Proposition~\ref{prop:degreebound}. Let $U_1$ be a non-empty Zariski open subset of $\X(C)$ such that  for any $c\in U_1$, the following conditions hold:
\begin{enumerate}
\item [(1)]
$(A_\calL)^c\in \GL_n(C(x))$;
\item [(2)]$(A_\calL)^c$ is the companion matrix of $\calL^c$, i.e. $(A_\calL)^c=A_{\calL^c}$;
\item [(3)] For all $1\leq i \leq n$, $T_i^c$ is invertible and is the transformation matrix that transforms $\delta(Y)=(\wedge^i A_{\calL^c})Y$ into $(\calL^c)_{[i]}(y)=0$;
\item [(4)] For all $1\leq i \leq n$, $(\calL^c)_{[i]}=(\calL_{[i]})^c$
\end{enumerate}
For each $1\leq i \leq n$, due to Proposition 5.10 of \cite{Feng-Wibmer:differentialgaloisgropus}, there exists an exponential bound $N(\calL_{[i]})$ for $\calL_{[i]}$ and an ad-open subset $V_i$  of $\X(C)$ such that for all $c\in V_i$, $N(\calL_{[i]})$ is an exponential bound for $(\calL_{[i]})^c$.  Set $U=U_1\cap V_1\cap \dots \cap V_n$. For each $c\in U$, applying Proposition~\ref{prop:degreebound} to $\calL^c$ with $k=C$ yields that $d(P)\leq n^3b(\calL^c)$ for any irreducible right-hand divisor $P$ of $\calL^c$, where 
$$
 b(\calL^c)=\max_{1\leq i\leq n} \left\{2{n \choose i}\deg(T_i^c)+{n \choose i}\left({n\choose i}-1\right)N((\calL^c)_{[i]})\right\}.
$$
For each $1\leq i \leq n$, $\deg(T_i^c)\leq \deg(T_i)$ and $N(\calL_{[i]})$ is an exponential bound for $(\calL^c)_{[i]}$ for all $c\in U$ because $(\calL^c)_{[i]}=(\calL_{[i]})^c$ by the choice of $c$.
Therefore, $b(\calL^c)\leq b(\calL)$ for all $c\in U$. This concludes the proposition.
\end{proof}

\section{Proof of Theorem~\ref{thm:mainresult}}
 In this section, we shall prove our main result. Let's start with a lemma. Let $T=(T_1,\dots, T_m)$ be a vector with indeterminate entries. 
 \begin{lem}
 	\label{lm:solutionunderspecialization}
Let $f_1(T),\dots, f_s(T), g(T)$ be polynomials in $C[\X][T]$. Suppose that $g\neq 0$ and the system
$$f_1(T)=\dots=f_s(T)=0, g(T)\neq 0$$
has no solution in $\overline{C(\X)}^m$. Then there exists a non-empty Zariski subset $U$ of $\X(C)$ such that for every $c\in U$, $g^c\neq 0$ and the system
$$ f_1^c(T)=\dots=f^c_s(T)=0, g^c(T)\neq 0$$
has no solution in $C^m$.
\end{lem}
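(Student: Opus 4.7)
The plan is to handle the condition $g^c\neq 0$ separately, and then reformulate the non-existence of solutions geometrically in order to apply Chevalley's theorem on constructible images.

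First, since $g\neq 0$ in $C[\X][T]$, at least one coefficient $g_\alpha\in C[\X]$ of $g$ (viewed as a polynomial in $T$) is non-zero, so the principal Zariski open subset $U_0=\{c\in\X(C)\mid g_\alpha^c\neq 0\}$ is non-empty and satisfies $g^c\neq 0$ in $C[T]$ for every $c\in U_0$. Next, let $W\subset \X\times \mathbb{A}_C^m$ be the locally closed subscheme cut out by $f_1=\cdots=f_s=0$ together with $g\neq 0$, and let $\pi\colon W\to \X$ be the projection. The generic fiber $W_\eta$ is a scheme of finite type over the field $C(\X)$, and by hypothesis it has no $\overline{C(\X)}$-points; because $\overline{C(\X)}$ is algebraically closed, this forces $W_\eta$ to be empty as a scheme, so the generic point $\eta\in \X$ does not lie in $\pi(W)$.

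By Chevalley's theorem, $\pi(W)$ is a constructible subset of $\X$. Since $\X$ is irreducible, any constructible subset whose Zariski closure equals $\X$ contains a non-empty open subset and hence the generic point; this shows that $\overline{\pi(W)}$ is a proper closed subvariety of $\X$. The complement $U_1=\X(C)\setminus\overline{\pi(W)}$ is therefore a non-empty Zariski open subset, and for every $c\in U_1$ the fiber $W_c$ is empty, which in affine coordinates means precisely that no $t\in C^m$ satisfies $f_i^c(t)=0$ for all $i$ together with $g^c(t)\neq 0$. Taking $U=U_0\cap U_1$ and invoking irreducibility of $\X$ yields the required non-empty Zariski open subset.

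The main obstacle is making rigorous the translation between the set-theoretic hypothesis ``no solution in $\overline{C(\X)}^m$'' and the schematic statement that $W_\eta$ is empty (and then back into non-existence of $C$-points of the fibers $W_c$); both directions rely on the Hilbert Nullstellensatz over algebraically closed fields. A fully elementary alternative bypasses schemes by applying the Rabinowitsch trick directly over $\overline{C(\X)}$ to obtain an identity $1=h_0(1-yg)+\sum_i h_i f_i$ with $h_i\in \overline{C(\X)}[T,y]$; one then chooses a finitely generated $C[\X]$-subalgebra $R\subset\overline{C(\X)}$ carrying all coefficients of the $h_i$, notes that $\mathrm{Spec}(R)\to \X$ is dominant, and extracts from this an open set of $c\in \X(C)$ which lift, via algebraic closedness of $C$, to ring homomorphisms $R\to C$. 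Specialising the identity along any such lift produces the required Nullstellensatz certificate over $C$.
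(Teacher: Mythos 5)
Your main argument is correct, but it takes a genuinely different route from the paper. You phrase the problem geometrically: form the locally closed set $W\subset \X\times\mathbb{A}^m_C$ cut out by $f_1=\cdots=f_s=0$, $g\neq 0$, observe that emptiness of $W_\eta(\overline{C(\X)})$ forces the generic fiber to be empty (Nullstellensatz over the algebraically closed field $\overline{C(\X)}$), and then use Chevalley's theorem to conclude that $\pi(W)$ is a constructible set missing the generic point, hence contained in a proper closed subset; its open complement does the job. The paper instead runs the Rabinowitsch trick directly: it passes to the system $f_1=\cdots=f_s=yg-1=0$, extracts a Nullstellensatz certificate $\sum p_i f_i+q(gy-1)=1$ with $p_i,q\in C(\X)[T,y]$ (note the certificate can be taken over $C(\X)$ rather than $\overline{C(\X)}$, since ideal membership of $1$ is a linear problem over $C(\X)$ solvable over the extension field, hence over $C(\X)$ itself -- this lets the paper avoid the descent through a finitely generated subalgebra that your sketched ``elementary alternative'' requires), clears denominators by some $h\in C[\X]$, and takes $U=\{c: h^c\neq 0,\ g^c\neq 0\}$; specializing the identity at $c\in U$ gives $\sum (hp_i)^c f_i^c+(hq)^c(g^cy-1)=h^c\neq 0$, which certifies unsolvability over $C$. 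The paper's approach is more explicit (it exhibits $U$ as a concrete principal open set via the certificate), while yours is shorter on computation but leans on Chevalley's theorem and the scheme-theoretic dictionary; both are sound, and your translation steps (nonempty finite-type scheme over a field has a point over the algebraic closure; a constructible set omitting the generic point of an irreducible variety is not dense) are all justified.
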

\begin{proof}
Let $y$ be a new indeterminate. Then the system 
$$f_1(T)=\dots=f_s(T)=0, g(T)\neq 0$$
has no solution in $\overline{C(\X)}^m$ if and only if the system 
\begin{equation}
\label{eqn:system2}
  f_1(T)=\dots=f_s(T)=yg(T)-1=0
 \end{equation}
has no solution in $\overline{C(\X)}^{m+1}$. Suppose that the system (\ref{eqn:system2}) has no solution in $\overline{C(\X)}^{m+1}$. By Hilbert's Nullstellensatz theorem, there exist $p_1,\dots, p_s, q \in C(\X)[T,y]$ such that 
$$
  \sum_{i=1}^s p_if_i+q(gy-1)=1.
$$ 
Let $h$ be a nonzero element in $C[\X]$ such that $hp_1,\dots, hp_s, hq\in C[\X][T,y]$, and let $U$ be the set of $\bfc\in \X(C)$ such that $h^c\neq 0$ and $g^c\neq 0$. Then for all $c\in U$, 
$$
   \sum_{i=1}^s (hp_i)^c f^c_i+(hq)^c (g^cy-1)=h^c\neq 0.
$$
This implies that for all $c\in U$, the system $f_1^c(T)=\dots=f_s^c(T)=g^cy-1=0$ has no solution in $C^{m+1}$. Equivalently, for all $c\in U$, the system $f_1^c(T)=\dots=f_s^c(T)=0, g^c(T)\neq 0$ has no solution in $C^m$.
\end{proof}

Suppose that $\calP,\calQ\in k(x)[\delta]$ and $\calQ\neq 0$. The Euclidean algorithm (see \cite{Ore1933}) implies that there exist $\calT,\calR\in k(x)[\delta]$ such that $\calP=\calT\calQ+\calR$ with $\calR=0$ or $\ord(\calR)<\ord(\calQ)$. The operator $\calR$ is called a remainder of $\calP$ with respect to $\calQ$, denoted by $\rem(\calP,\calQ)$. Furthermore, $\calQ$ is a right-hand divisor of $\calP$ if and only if $\rem(\calP,\calQ)=0$. The connection between the coefficients of the remainder $\calR$ and the coefficients of $\calP$ is described in the following lemma. Let $T$ be a vector with indeterminate entries. By setting $\delta(t)=0$ for any entry $t$ of $T$, $k(T)(x)$ becomes a differential field extension of $k(x)$.
\begin{lem}
\label{lm:remainder}
Suppose that $\calL\in k[T,x][\delta]$ and $\calP\in k(T)(x)[\delta]$. Assume further that 
$$
\calP=\delta^s+\frac{p_{s-1}}{q}\delta^{s-1}+\dots+\frac{p_0}{q}
$$
where $p_i,q\in k[T][x]$. Then there exist $\calQ,\calR\in k(T,x)[\delta]$ of the following form:
\begin{equation}
	\label{eqn:form}
  \frac{r_{\nu-1}}{q^m}\delta^{\nu-1}+\dots+\frac{r_0}{q^m}, m\geq 0, r_i\in k[T][x]
\end{equation}
such that $\calL=\calQ\calP+\calR$ and $\ord(\calR)<s$.
\end{lem}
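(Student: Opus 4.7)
The plan is to prove the lemma by induction on $N := \ord(\calL)$, mimicking the right Euclidean division in $k(T,x)[\delta]$ while tracking denominators to guarantee they remain powers of $q$. The base case $N < s$ is immediate: take $\calQ = 0$ and $\calR = \calL$, which is already of the prescribed form with $m = 0$.

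For the inductive step with $N \geq s$, I would write $\calL = \ell_N \delta^N + \cdots + \ell_0$ with $\ell_i \in k[T][x]$. Since $\calP$ is monic in $\delta$, subtracting $\ell_N \delta^{N-s}\calP$ cancels the leading term, so $\calL - \ell_N \delta^{N-s}\calP$ has order strictly less than $N$. The key bookkeeping observation is that $\delta^j(1/q) \in q^{-(j+1)} k[T][x]$ for every $j \geq 0$, which follows at once by induction on $j$ from the quotient rule. Combined with the Leibniz identity $\delta^{N-s}\cdot (p_i/q) = \sum_{l=0}^{N-s}\binom{N-s}{l}\delta^l(p_i/q)\,\delta^{N-s-l}$, this shows that every coefficient of $\ell_N \delta^{N-s}\calP$ lies in $q^{-(N-s+1)} k[T][x]$. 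Hence
\[
\tilde\calL := q^{N-s+1}\bigl(\calL - \ell_N\delta^{N-s}\calP\bigr)
\]
lies in $k[T][x][\delta]$ and has order strictly less than $N$.

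Applying the induction hypothesis to $\tilde\calL$ produces $\tilde\calQ, \tilde\calR \in k(T,x)[\delta]$, each of the required form with a common denominator $q^{m_1}$, satisfying $\tilde\calL = \tilde\calQ\calP + \tilde\calR$ and $\ord(\tilde\calR) < s$. Dividing the relation by $q^{N-s+1}$ and rearranging yields $\calL = \calQ\calP + \calR$ with
\[
\calQ := \ell_N\delta^{N-s} + \frac{\tilde\calQ}{q^{N-s+1}}, \qquad \calR := \frac{\tilde\calR}{q^{N-s+1}}.
\]
Setting $m := m_1 + N - s + 1$, both $\calQ$ and $\calR$ rewrite as $\sum_i (r_i/q^m)\delta^i$ with $r_i \in k[T][x]$, and clearly $\ord(\calR) = \ord(\tilde\calR) < s$.

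The only non-routine ingredient is denominator control; everything else is standard non-commutative right-division. The elementary identity $\delta^j(1/q) \in q^{-(j+1)}k[T][x]$ is what guarantees that at each inductive step a single power of $q$ suffices to clear all denominators, so the accumulated denominator stays a power of $q$ throughout the recursion and the form specified in \eqref{eqn:form} is preserved.
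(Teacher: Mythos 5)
Your proof is correct and follows essentially the same route as the paper: induction on $\ord(\calL)$, cancelling the leading term with $\ell_N\delta^{N-s}\calP$, clearing denominators by multiplying by $q^{N-s+1}$ so the induction hypothesis applies, then dividing back. The only difference is cosmetic — you make the denominator bookkeeping ($\delta^j(1/q)\in q^{-(j+1)}k[T][x]$) explicit where the paper calls it ``a straightforward calculation.''
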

\begin{proof}
	We will prove the lemma by induction on $n=\ord(\calL)$. If $n<s$ then set $\calQ=0,\calR=\calL$, and the lemma is obvious. Now, assume that $n\geq s$ and the assertion holds for operators with orders less than $n$.  A straightforward calculation yields that
	$$
	a\delta^{n-s}\calP=a\delta^n+\frac{b_{n-1}}{q^{n-s+1}}\delta^{n-1}+\dots+\frac{b_0}{q^{n-s+1}}
	$$
	where $a$ is the leading coefficient of $\calL$ and $b_i\in k[T][x]$.
    Hence $q^{n-s+1}(\calL-a\delta^{n-s}\calP)\in k[T,x][\delta]$ and its order is less than $n$. By the  induction hypothesis, there exist $\tilde{\calQ},\tilde{\calR}$ of the form (\ref{eqn:form}) such that 
    $$
      q^{n-s+1}(\calL-a\delta^{n-s}\calP)=\tilde{\calQ}\calP+\tilde{\calR}
    $$ 
    and $\ord(\tilde{\calR})<s$. Then $$
    \calL=\left(a\delta^{n-s}+\frac{\tilde{\calQ}}{q^{n-s+1}}\right)\calP+\frac{\tilde{\calR}}{q^{n-s+1}}.
    $$
    So $a\delta^{n-s}+\frac{\tilde{\calQ}}{q^{n-s+1}}$ and $\frac{\tilde{\calR}}{q^{n-s+1}}$ are the operators as required.
\end{proof}

We are now ready to prove our main result.
\begin{proof}[Proof of Theorem~\ref{thm:mainresult}]
 Let $U_1$ be an ad-open subset of $\X(C)$ such that for all $c\in U_1$ and all irreducible right-hand divisors $P$ of $\calL^c$ with $\ord(P)<\ord(\calL^c)$, $d(P)\leq n^3b(\calL)$. Such $U_1$ exists due to Proposition~\ref{prop:boundunderspecialization}.  We shall show that for each $1\leq s \leq n-1$, there exists a non-empty Zariski open subset $V_s$ of $\X(C)$ such that for all $c\in U_1\cap V_s$, $\calL^c$ has no irreducible right-hand divisor of order $s$. Set $\nu=n^4b(\calL)$. Let 
$$
 T=(t_{0,0},t_{1,0},\dots,t_{s,\nu})
$$ 
be a vector with indeterminate entries $t_{i,j}$ and set $k=C(\X)(T)$. By setting $\delta(t_{i,j})=0$ for all $i,j$, $k(x)$ becomes a differential extension field of $C(\X)(x)$. Set $q=\sum_{j=0}^\nu t_{s,j}x^j$ and
$$
   \calP=\delta^s+\left(\frac{\sum_{j=0}^\nu t_{s-1,j}x^j}{q}\right)\delta^{s-1}+\dots+\left(\frac{\sum_{j=0}^\nu t_{0,j}x^j}{q}\right).
$$
Let $a$ be an element in $C[\X][x]$ such that $a\calL\in C[\X][x][\delta]$. 
Due to Lemma~\ref{lm:remainder}, there exist $\tilde{\calQ},\tilde{\calR}$ of the form (\ref{eqn:form}) such that 
$a\calL=\tilde{\calQ}\calP+\tilde{\calR}$ and $\ord(\calR)<s$. Therefore
$$
   \calL=\left(\frac{p_{n-s}}{aq^{m_1}}\delta^{n-s}+\dots+\frac{p_0}{aq^{m_1}}\right)\calP+\frac{r_{s-1}}{aq^{m_2}}\delta^{s-1}+\dots+\frac{r_0}{aq^{m_2}},
$$
where $p_i,r_j\in C(\X)[T][x]$. Multiplying $a$ by a suitable element in $C[\X]$, one may assume that $p_i,r_j\in C[\X][T][x]$. Each element of $\X(C)$ can be lifted to a $C$-homomorphism from $C[\X][T]$ to $C[T]$ by assigning $c(t_{i,j})=t_{i,j}$ for all $i,j$. For the sake of notation, we still use $c$ to denote the lifted homomorphism. Note that  $c(\delta(f))=\delta(c(f))$ for any $f\in C[\X][x]$, and $q^c=q,\calP^c=\calP$. For $c\in \X(C)$ such that $a^c\neq 0$, one has that
\begin{equation}
	\label{eqn:remaindunderspecialization}
   \calL^c=\left(\frac{p^c_{n-s}}{a^cq^{m_1}}\delta^{n-s}+\dots+\frac{p^c_0}{a^cq^{m_1}}\right)\calP+\frac{r^c_{s-1}}{a^cq^{m_2}}\delta^{s-1}+\dots+\frac{r^c_0}{a^cq^{m_2}}.
\end{equation}
On the other hand, let $\bfc\in \overline{C(\X)}^{(s+1)(\nu+1)}$ satisfy $q(\bfc)\neq 0$, where $q(\bfc)$ denotes the element in $\overline{C(\X)}[x]$ obtained by substituting $\bfc$ into $T$. One has that 
$$
    \calL=\left(\frac{p_{n-s}(\bfc)}{aq(\bfc)^{m_1}}\delta^{n-s}+\dots+\frac{p_0(\bfc)}{aq(\bfc)^{m_1}}\right)\calP(\bfc)+\frac{r_{s-1}(\bfc)}{aq(\bfc)^{m_2}}\delta^{s-1}+\dots+\frac{r_0(\bfc)}{aq(\bfc)^{m_2}}.
$$
Similarly, after replacing $T$ with $\bfc\in C^{(s+1)(\nu+1)}$ satisfying $p(\bfc)\neq 0$, the equality (\ref{eqn:remaindunderspecialization}) still holds.  

Let $W$ be the set of the coefficients of $r_{s-1},\dots,r_0$, viewed as polynomials in $x$. Then $W\subset C(\X)[T]$. Since $\calL$ is irreducible over $\overline{C(\X)}(x)$, for any $0\leq j \leq \nu$, the system 
$$
 S_j=\{f(T)=0, \forall f\in W, t_{s,j}\neq 0\}
$$ 
has no solution in $\overline{C(\X)}^{(s+1)(\nu+1)}$. Let $V_s$ be a non-empty Zariski open subset of $\X(C)$ such that for any $c\in V_s$, all systems $S_0^c,S_1^c,\dots,S_\nu^c$ have no solution in $C^{(s+1)(\nu+1)}$, where $S^c_j=\{f^c(T), \forall f\in W, t_{s,j}\neq 0\}$. Such $V_s$ exists due to Lemma~\ref{lm:solutionunderspecialization}. Set $U=U_1\cap V_1\cap \dots \cap V_{n-1}$. We claim that $U$ is what we seek. Suppose that $c\in U$ and $\calL^c$ has an irreducible right-hand divisor $P$ of order $s$ with $1\leq s \leq n-1$. Without loss of generality, we may assume that $P$ is monic. By Proposition~\ref{prop:degreebound}, $d(P)\leq n^3b(\calL)$. By taking a common denominator of the coeficients of $P$, each coefficient of $P$ can be rewritten as the quotient of two polynomials in $x$ with degrees not greater than $\nu$. In other words, we may write
$$
    P=\delta^s+\left(\frac{\sum_{j=0}^\nu c_{s-1,j}x^j}{\sum_{j=0}^\nu c_{s,j}x^j}\right)\delta^{s-1}+\dots+\left(\frac{\sum_{j=0}^\nu c_{0,j}x^j}{\sum_{j=0}^\nu c_{s,j}x^j}\right)
$$
where $c_{i,j}\in C$ and not all $c_{s,0},\dots,c_{s,\nu}$ are zero. Set $\bfc=(c_{0,0},c_{1,0},\dots,c_{s,\nu})$. Since $q(\bfc)=\sum_{j=0}^\nu c_{s,j}x^j\neq 0$ and $P$ is a right-hand divisor of $\calL^c$, substituting $\bfc$ into $T$ in (\ref{eqn:form}) yields that 
$$
   r^c_{s-1}(\bfc)=\dots=r_0^c(\bfc)=0.
$$ 
This implies that $f^c(\bfc)=0$ for all $f\in W$. As there exists $0\leq j_1\leq \nu$ such that $c_{s,j_1}\neq 0$, $\bfc$ is a solution of the system $S_{j_1}^c$, contradicting the choice of $c$ such that $S^c_j$ has no solution in $C^{(s+1)(\nu+1)}$ for all $0\leq j\leq \nu$. This proves our claim which concludes the theorem.
\end{proof}

%插入文献，IEEEtran为格式，sample为.bib文件名
\bibliographystyle{plain}

\begin{thebibliography}{10}

\bibitem{ChurchillZhang2009}
Richard~C. Churchill and Yang Zhang.
\newblock Irreducibility criteria for skew polynomials.
\newblock {\em J. Algebra}, 322(11):3797--3822, 2009.

\bibitem{CompointSinger:computinggaloisgroups1999}
\'{E}. Compoint and Michael~F. Singer.
\newblock Computing {G}alois groups of completely reducible differential
  equations.
\newblock volume~28, pages 473--494. 1999.
\newblock Differential algebra and differential equations.

\bibitem{CompointWeil2004}
\'{E}. Compoint and J.~A. Weil.
\newblock Absolute reducibility of differential operators and {G}alois groups.
\newblock {\em J. Algebra}, 275(1):77--105, 2004.

\bibitem{Feng-Wibmer:differentialgaloisgropus}
Ruyong Feng and Michael Wibmer.
\newblock Differential galois groups, specializations and matzat's conjecture.
\newblock {\em arXiv.2209.01581}, 2022.

\bibitem{FriedJarden2008}
Michael~D. Fried and Moshe Jarden.
\newblock {\em Field arithmetic}, volume~11 of {\em Ergebnisse der Mathematik
  und ihrer Grenzgebiete. 3. Folge. A Series of Modern Surveys in Mathematics
  [Results in Mathematics and Related Areas. 3rd Series. A Series of Modern
  Surveys in Mathematics]}.
\newblock Springer-Verlag, Berlin, third edition, 2008.
\newblock Revised by Jarden.

\bibitem{Grigorev1989}
D.~Yu. Grigor\'{e}v.
\newblock Complexity of factorization and g.c.d. calculation for linear
  ordinary differential operators.
\newblock {\em Zap. Nauchn. Sem. Leningrad. Otdel. Mat. Inst. Steklov. (LOMI)},
  176:68--103, 152--153, 1989.

\bibitem{Grigoriev1990}
D.~Yu. Grigor\'{e}v.
\newblock Complexity of irreducibility testing for a system of linear ordinary
  differential equations.
\newblock In {\em I{SSAC} 1990---{P}roceedings of the 37th {I}nternational
  {S}ymposium on {S}ymbolic and {A}lgebraic {C}omputation}, pages 225--230.
  ACM, New York, 1990.

\bibitem{Hrushovski:computingdifferentialgaloisgroups}
Ehud Hrushovski.
\newblock Computing the galois group of a linear differential equation.
\newblock In {\em Differential Galois theory (Bedlewo, 2001)}, volume~58 of
  {\em Banach Center Publ.}, pages 97--138. Polish Acad. Sci. Inst. Math.,
  Warsaw, 2002.

\bibitem{Jacobson1996}
Nathan Jacobson.
\newblock {\em Finite-dimensional division algebras over fields}.
\newblock Springer-Verlag, Berlin, 1996.

\bibitem{Kovacic1972}
J.~Kovacic.
\newblock An {E}isenstein criterion for noncommutative polynomials.
\newblock {\em Proc. Amer. Math. Soc.}, 34:25--29, 1972.

\bibitem{LiSchwarzTsarev2003}
Ziming Li, Fritz Schwarz, and Serguei~P. Tsarev.
\newblock Factoring systems of linear {PDE}s with finite-dimensional solution
  spaces.
\newblock volume~36, pages 443--471. 2003.
\newblock International Symposium on Symbolic and Algebraic Computation
  (ISSAC'2002) (Lille).

\bibitem{LiWang2011}
Ziming Li and Huaifu Wang.
\newblock A criterion for the similarity of length-two elements in a
  noncommutative {PID}.
\newblock {\em J. Syst. Sci. Complex.}, 24(3):580--592, 2011.

\bibitem{Ore1933}
Oystein Ore.
\newblock Theory of non-commutative polynomials.
\newblock {\em Ann. of Math. (2)}, 34(3):480--508, 1933.

\bibitem{Singer1996}
Michael~F. Singer.
\newblock Testing reducibility of linear differential operators: a
  group-theoretic perspective.
\newblock {\em Appl. Algebra Engrg. Comm. Comput.}, 7(2):77--104, 1996.

\bibitem{Tsarev1994}
Serguei~P. Tsarev.
\newblock Some problems that arise in the factorization of linear ordinary
  differential operators.
\newblock {\em Programmirovanie}, (1):45--48, 1994.

\bibitem{vanHoeij1997}
Mark van Hoeij.
\newblock Factorization of differential operators with rational functions
  coefficients.
\newblock {\em J. Symbolic Comput.}, 24(5):537--561, 1997.

\end{thebibliography}

\end{document}